\documentclass[a4paper,11pt]{article}
\usepackage[T1]{fontenc}
\usepackage{lmodern}
\usepackage{amsmath,amssymb,amsthm}
\usepackage[left=27mm,right=27mm,top=25mm,bottom=25mm]{geometry}
\usepackage[hidelinks]{hyperref}
\newtheorem{theorem}{Theorem}[section]
\newtheorem{lemma}[theorem]{Lemma}
\newcommand{\Z}{\mathbb Z}
\newcommand{\Q}{\mathbb Q}
\newcommand{\N}{\mathbb N}
\newcommand{\Fp}{\mathbb F_p}
\newcommand{\eps}{\varepsilon}
\newcommand{\vp}{\nu_p}
\newcommand{\Zp}{\Z_{(p)}}
\DeclareMathOperator{\lcm}{lcm}
\title{On the denominators of signed harmonic sums}
\author{Quan-Hui Yang\thanks{The author is supported by the National Natural Science Foundation of China (Grants No. 12371005 and 12671008).}\\[4pt]
{\small School of Mathematical Sciences and}\\
{\small Ministry of Education Key Laboratory for NSLSCS,}\\
{\small Nanjing Normal University, Nanjing 210023, P.R. China}\\
{\small\texttt{yangquanhui01@163.com}}}
\date{}
\begin{document}
\maketitle
\begin{abstract}
Let $\eps=(\eps_i)_{i\ge1}$ be a sequence with $\eps_i\in\{-1,1\}$, and let $v_n(\eps)$ be the positive denominator of $\sum_{i=1}^n\eps_i/i$ in lowest terms. We prove that $|\{1\le n\le x:v_n(\eps)\ne v_{n+1}(\eps)\}|=o(x)$ uniformly in $\eps$. In particular, the set of positive integers $n$ with $v_n(\eps)=v_{n+1}(\eps)$ has asymptotic density one for every sequence of signs. This answers a problem of Wu and Chen, \emph{J. Number Theory} \textbf{200} (2019), 397--406.
\end{abstract}

{\small
\noindent\textbf{Keywords:} Signed harmonic sums, denominators, asymptotic density.
\par\noindent\textbf{2020 Mathematics Subject Classification:} 05A16; 11B83.
\par}

\section{Introduction}
Let $\N$ denote the set of positive integers. For a sequence $\eps=(\eps_i)_{i\ge1}$ with $\eps_i\in\{-1,1\}$, write
\[
 H_n(\eps)=\sum_{i=1}^n\frac{\eps_i}{i}
 =\frac{u_n(\eps)}{v_n(\eps)},\qquad
 (u_n(\eps),v_n(\eps))=1,\quad v_n(\eps)>0.
\]
The choices $\eps_i=1$ and $\eps_i=(-1)^{i-1}$ give the ordinary and alternating harmonic numbers, respectively. Put $\mathbf1=(1,1,\ldots)$.

Eswarathasan and Levine~\cite{EL} and Boyd~\cite{Boyd} studied the divisibility of the numerators of ordinary harmonic numbers. Sanna~\cite{Sanna} and Wu and Chen~\cite{WC17} obtained quantitative estimates for the sets arising from their $p$-adic valuations. Wu and Chen~\cite{WC18} proved that, for every positive integer $m$, the set $\{n\in\N:m\mid v_n(\mathbf1)\}$ has asymptotic density one.

For ordinary harmonic numbers, Shiu~\cite{Shiu} proved that $v_n(\mathbf1)<v_{n+1}(\mathbf1)$, $v_n(\mathbf1)=v_{n+1}(\mathbf1)$ and $v_n(\mathbf1)>v_{n+1}(\mathbf1)$ each hold for infinitely many $n$. Wu and Chen~\cite[Theorems~1.1--1.2]{WC} proved that the equality holds on a set of asymptotic density one for both the ordinary and alternating harmonic numbers. They posed the following problem.

\medskip\noindent
\textbf{Problem}~\cite[Problem~1]{WC}.
\textit{For every sequence $\eps\in\{-1,1\}^{\N}$, does the set of positive integers $n$ with $v_n(\eps)=v_{n+1}(\eps)$ have asymptotic density one?}
\medskip

Wu and Yan~\cite[Theorem~1.1]{WY} gave an affirmative answer when $\eps$ is purely periodic, that is, when $\eps_{i+s}=\eps_i$ for some positive integer $s$ and all $i\ge1$. For relatively prime positive integers $a$ and $b$, Chen and Wu~\cite[Corollary~2]{CW21} proved the corresponding density-one result for the denominators of $\sum_{j=0}^{n-1}1/(aj+b)$ and $\sum_{j=0}^{n}1/(aj+b)$.

In this paper, we give an affirmative answer for an arbitrary sequence of signs. We prove the following uniform result.

\begin{theorem}\label{thm:main}
We have
\[
 \lim_{x\to\infty}\sup_{\eps\in\{-1,1\}^{\N}}
 \frac1x\bigl|\{1\le n\le x:v_n(\eps)\ne v_{n+1}(\eps)\}\bigr|=0.
\]
In particular, for every $\eps\in\{-1,1\}^{\N}$, the set $\{n\in\N:v_n(\eps)=v_{n+1}(\eps)\}$ has asymptotic density one.
\end{theorem}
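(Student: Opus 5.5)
Write $H_{n+1}=H_n+\eps_{n+1}/(n+1)$. The natural plan is to compare $v_n$ and $v_{n+1}$ one prime at a time. Fix $n$ and a prime $p$, and put $a=\vp(n+1)$. If $\vp(H_n)<-a$, then $\vp(H_{n+1})=\vp(H_n)$. If $\vp(H_n)>-a$, then $\vp(H_{n+1})=-a$. So the $p$-part of the denominator can change only in two ways:
\begin{enumerate}
\item $p\mid n+1$ and $-\vp(H_n)\le a$, i.e.\ $p^a$ ``beats'' the current $p$-part of the denominator;
\item $p\nmid n+1$, and $p\mid v_n$ (this case alone then forces the change).
\end{enumerate}
The second case is harmless only if $\vp(H_n)\ge0$ whenever $p\nmid n+1$. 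Otherwise the $p$-part persists, so $v_n\ne v_{n+1}$ requires that for some prime power $p^a\,\|\,n+1$ we have $\vp(v_n)\le a$.

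The key structural fact is that the largest prime power $p^{k}\le n$ with $k=\lfloor\log_p n\rfloor$ contributes $\eps/p^k$ as the unique term with that valuation. Hence $\vp(H_n)=-k$ always, for every prime $p\le n$. It follows that $v_n=\prod_{p\le n}p^{\lfloor\log_p n\rfloor}\cdot(\text{correction})$; more precisely, $\vp(v_n)=\lfloor\log_p n\rfloor$ exactly, independently of $\eps$. So $v_n=\lcm(1,\dots,n)$ for every sign sequence?

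This needs checking against the known ordinary harmonic case, where $v_n\ne\lcm$ in general. Indeed, uniqueness of the term requires that only one multiple of $p^k$ lie in $[1,n]$, i.e.\ $n<2p^k$. In general the multiples $jp^k$ with $j<p$ contribute $p^{-k}\sum_{j\le n/p^k}\eps_{jp^k}/j$, and this sum can be divisible by $p$. That is exactly where $\eps$ matters, so the plan must be refined.

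Refined plan: the denominator is fixed apart from ``exceptional'' primes, and we bound the number of bad $n$ by a union over primes.
\begin{enumerate}
\item \emph{Small primes $p\le y$, with $y$ fixed.} We need that for most $n$, with $p^k\le n<p^{k+1}$, we have $\vp(v_n)\ge\vp(n+1)+1$. It suffices that $\vp(H_n)\le -(k-1)$ or so, while $\vp(n+1)\le k-2$. The condition $p^{k-1}\mid n+1$ has density $O(1/p^{k-1})$ among $n\in[p^k,p^{k+1})$, which is $O(p^{2}/x)$ overall and hence negligible. The real issue is showing that $\vp(H_n)$ is not much larger than $-k$. Write $H_n$'s $p^{-k}$ and $p^{-(k-1)}$ parts as $S_1=\sum_{j\le n/p^k}\eps_{jp^k}/j$ and $S_2$. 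Then $S_2$ varies with $n$ as $n$ passes multiples of $p^{k-1}$, while $S_1$ is constant on long stretches. Changing $\lfloor n/p^{k-1}\rfloor$ adds $\pm1/j$ terms mod $p$. Since consecutive partial sums $S_2(m)$, $S_2(m+1)$ differ by a unit mod $p$, at most half of the $m$-blocks have both $S_1\equiv0$ and $S_2\equiv0$. This does not suffice directly, so we iterate: to get $\vp(H_n)\ge -k+t$ one needs $t$ consecutive digit-level sums to vanish, and each level gives a factor bounded away from 1, uniformly in $\eps$. One can also take $n$ only in ranges where the proportion is small. Hence bad $n$ for $p$ have upper density $\le C_p\theta_p^{t}$ plus $p^{-t}$-type terms, and choosing $t$ large makes this small for each $p\le y$.
\item \emph{Large primes $p>y$.} The bad event requires $p\mid n+1$ and $-\vp(H_n)\le\vp(n+1)$. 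Take $\vp(n+1)=1$ (higher powers give $\sum p^{-2}$, which is small). We then need $\vp(H_n)\ge -1$. For $p>\sqrt{x}$, $k=1$ and $H_n$'s $p$-part is $p^{-1}\sum_{j\le n/p}\eps_{jp}/j$. The event is that this partial sum is $\equiv0\pmod p$ at $j=(n+1)/p$, i.e.\ $S(j-1)\equiv0$ with $S$ a $\pm1/j$ walk mod $p$. We then need to count pairs $(p,j)$, $jp\le x$, with $S_p(j-1)\equiv0\pmod p$, and show the count is $o(x)$.
\end{enumerate}

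Main obstacle: bounding this count uniformly in $\eps$ for large $p$. The trivial bound $\sum_p x/p$ diverges. I would try to show that for $j<p$ the values $S_p(0),\dots,S_p(J)$ hit $0$ at most $O(J/p+\text{small})$ times on average. Since we need $S_p(j)\equiv0$, the numerator of the rational $\sum_{i\le j}\eps_{ip}/i$ (size $\le e^{O(j)}$) must be divisible by $p$. For fixed $j$, the number of such primes $p>e^{Cj}$ is zero, and for $j$ small relative to $\log p$ this kills the contribution. More generally, the primes $p$ dividing a fixed numerator number at most $O(j/\log j)$, but $\eps_{ip}$ depends on $p$, which is the difficulty. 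I'd handle it by counting: for fixed $j$, the sum over $p\in(x/2^{r+1},x/2^r]$ of bad $j$. Then I would use that the numerator is a nonzero integer $\ll\lcm(1..j)\log j\le e^{1.1j}$, so $p\mid$ it with $p>e^{1.1j}$ is impossible. This gives bad pairs only with $j\gg\log p$, i.e.\ $n\ge p\log p$. Summing density $\sum_p 1/p$ restricted there is still divergent, so a genuine equidistribution/anti-concentration of the walk $S_p$ is needed. I expect this to be the heart of the proof.
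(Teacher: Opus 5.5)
\textbf{Verdict: genuine gap.} Your proposal gives no argument for the primes $p>y$, and that is where the theorem is decided. You acknowledge this yourself at the end.

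The preliminary parts are essentially sound once tidied, apart from two slips.
\begin{itemize}
\item Your item~2 is wrong as stated: if $p\nmid n+1$, the exponent of $p$ in the denominator never changes. Your next sentence effectively concedes this.
\item For $p>\sqrt x$ the condition $\vp(H_n)\ge -1$ is automatic. The actual event is $S_p(j-1)\equiv 0$ \emph{or} $S_p(j)\equiv 0\pmod p$, because $\vp(H_n)=-1$ can cancel against $\eps_{n+1}/(n+1)$. This only doubles the count.
\end{itemize}
Your small-prime sketch is, in outline, the subdivision argument of Lemma~\ref{lem:fixed}. Discarding $p^2\mid n+1$ for $p>y$ at cost $\sum_{p>y}x/p^2\ll x/y$ is legitimate.

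The problem is the case $a=1$. There you try to rule out each zero of the walk $S_p$ using the numerator of the whole partial sum $\sum_{i\le j}\eps_{ip}/i$. That numerator has size $e^{O(j)}$, so it only excludes $j\ll\log p$. You are then left needing an anti-concentration statement for the walk, uniform in $\eps$, which you do not supply.

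\textbf{The missing idea is to compare \emph{consecutive} zeros rather than measuring from the origin.} Within a block $[jp,(j+1)p)$, each counted $k$ is one of three kinds:
\begin{itemize}
\item a first index, of which there are at most $M/p+1$;
\item the right end of a gap longer than $L$, of which there are at most $M/L$, since the gaps telescope inside $[0,M]$;
\item the right end of a gap of length $d\le L$.
\end{itemize}
In the last case, subtracting the two partial sums leaves only $\sum_{s=1}^d\sigma_s/(k+s)$, with $\sigma_s=\eps_{(k+s)p}$. Hence $p\mid P_{d,\sigma}(k)$. This integer depends only on $(k,d,\sigma)$ and has size at most $d(k+d)^{d-1}$, which is polynomial in $k$ rather than exponential in $j$. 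The dependence of the signs on $p$, which you flagged as the obstacle, disappears if you range over all $2^d$ patterns $\sigma$.

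\emph{Large primes, $p>\sqrt x$.} Reverse the order of counting. There are $\ll_L\sqrt x$ triples $(k,d,\sigma)$. For each, $P_{d,\sigma}(k)$ is nonzero (K\"ursch\'ak: exactly one of $k+1,\dots,k+d$ has maximal $2$-adic valuation) and at most $x^L$. So fewer than $2L$ primes above $\sqrt x$ divide it. Since $\sum_{\sqrt x<p\le x}1/p=O(1)$, the total count is $\ll \pi(x)+x/L+C_L\sqrt x$.

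\emph{Intermediate primes, $y<p\le\sqrt x$.} A fixed $L$ does not work here, because $\sum_{y<p\le\sqrt x}1/p$ is unbounded. Instead, the short-gap starts lie in root classes mod $p$ of the reductions of $P_{d,\sigma}$. These reductions are nonzero since $P_{d,\sigma}(-1)=\sigma_1(d-1)!$. That gives at most $\sum_{d\le L}2^d(d-1)$ residue classes. Choosing $L=L_p\asymp\log p$ gives a per-prime bound $\ll x/(p\log p)+p^{1/4}\log x$. Summed over $p>y$, this is $\ll x/\log y+x^{5/8}\log x$. This is the content of Lemmas~\ref{lem:nonzero} and~\ref{lem:returns} and the three-range argument in the paper.
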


\section{Preliminaries}
We write $H_n=H_n(\eps)$ and $v_n=v_n(\eps)$, and put $H_0=0$ and $v_0=1$. The sequence $\eps$ is arbitrary. The sets and counting functions introduced below may depend on $\eps$, although this dependence is omitted from the notation. All indices in these sets and sums are integers.

Throughout the paper, $p$ denotes a prime. For a nonzero rational number $r$, write $r=p^b u/v$, where $b\in\Z$, $u,v\in\Z$ and $p\nmid uv$, and define $\vp(r)=b$. We set $\vp(0)=+\infty$. Put
\[
 \Zp=\{r\in\Q:\vp(r)\ge0\}
 =\{u/v:u,v\in\Z,\ p\nmid v\}.
\]
For $r,s\in\Zp$, the congruence $r\equiv s\pmod p$ means $r-s\in p\Zp$. Thus a rational number $u/v$ with $p\nmid v$ is reduced modulo $p$ by multiplying the residue of $u$ by the inverse of the residue of $v$ in $\Fp$. We only use rational congruences when the quantities being reduced belong to $\Zp$.

For products, we have $\vp(rs)=\vp(r)+\vp(s)$ for nonzero $r,s$, as follows directly from the definition.

For sums, we have $\vp(r+s)\ge\min\{\vp(r),\vp(s)\}$ for all rational $r,s$. When $\vp(r)\ne\vp(s)$, equality holds. To see this for nonzero $r,s$, put $a=\min\{\vp(r),\vp(s)\}$ and write $r=p^a u$, $s=p^a v$ with $u,v\in\Zp$. Since $u+v\in\Zp$, the inequality follows. If the two valuations are different, exactly one of $u,v$ belongs to $p\Zp$, so their sum is nonzero modulo $p$ and has valuation zero. This proves the equality assertion. If one summand is zero, both assertions follow from $r+0=r$.

If $p\mid v_n$, the reduced-fraction condition gives $p\nmid u_n(\eps)$, so $\vp(H_n)=-\vp(v_n)<0$. If $p\nmid v_n$, then $\vp(H_n)\ge0$. Consequently,
\begin{equation}\label{eq:denominator}
 \vp(v_n)=\max\{0,-\vp(H_n)\}\qquad(n\ge0).
\end{equation}
The convention at $n=0$ makes~\eqref{eq:denominator} valid there as well. All logarithms are natural. We write $F\ll G$ if $|F|\le CG$ for an absolute constant $C$, and $F\ll_L G$ if the constant may depend on $L$. Constants in these estimates never depend on $\eps$. We use Chebyshev's bound $\pi(t)\ll t/\log t$ for $t\ge2$, where $\pi(t)$ denotes the number of primes not exceeding $t$.

For integers $a\ge1$, $k\ge0$ and $M\ge0$, put
\[
 B_{p,a}(k)=p^aH_{kp^a},\qquad
 N_{p,a}(M)=|\{0\le k\le M:B_{p,a}(k)\in p\Zp\}|.
\]
The index $k=0$ is included in $N_{p,a}(M)$, since $B_{p,a}(0)=0$. We do not assume that every $B_{p,a}(k)$ belongs to $\Zp$; the condition $B_{p,a}(k)\in p\Zp$ is part of the definition of the counted set. For $x\ge1$, let $D_p(x)=|\{1\le m\le x:\vp(v_{m-1})\ne\vp(v_m)\}|$. The next lemma relates changes in the exponent of $p$ in a denominator to the indices counted by $N_{p,a}$.

\begin{lemma}\label{lem:reduction}
If $m$ is counted by $D_p(x)$, then $p\mid m$. Moreover, if $p^a\parallel m$, then
\begin{equation}\label{eq:event}
 \vp(H_{m-1})\ge1-a\quad\text{or}\quad\vp(H_m)\ge1-a.
\end{equation}
Consequently,
\begin{equation}\label{eq:reduction}
 D_p(x)\le2\sum_{\substack{a\ge1\\p^a\le x}}
 N_{p,a}\left(\left\lfloor\frac{x}{p^a}\right\rfloor\right).
\end{equation}
\end{lemma}
\begin{proof}
We have $H_m=H_{m-1}+\eps_m/m$. Suppose first that $p\nmid m$. Then $\vp(\eps_m/m)=0$. If $\vp(H_{m-1})<0$, the two summands have different valuations, so $\vp(H_m)=\vp(H_{m-1})$. If $\vp(H_{m-1})\ge0$, both summands belong to $\Zp$, and hence $\vp(H_m)\ge0$. In the first case, the two negative valuations are equal; in the second, neither reduced denominator is divisible by $p$. Thus~\eqref{eq:denominator} gives $\vp(v_m)=\vp(v_{m-1})$ in both cases. Every index counted by $D_p(x)$ must therefore be divisible by $p$.

Now let $m$ be such an index and write $p^a\parallel m$ with $a\ge1$. Put $s=\vp(H_{m-1})$ and $t=\vp(H_m)$. Since $\vp(\eps_m/m)=-a$, there are three possibilities. If $s<-a$, the unequal-valuation rule gives $t=s$, and the denominator exponents are equal, contrary to the choice of $m$. If $s=-a$, then $t\ge-a$; equality would again give equal denominator exponents, so $t>-a$. If $s>-a$, this inequality already holds for the earlier sum, and the unequal-valuation rule gives $t=-a$. Thus $s>-a$ or $t>-a$. Valuations lie in $\Z\cup\{+\infty\}$, so these inequalities imply~\eqref{eq:event}.

Write $m=kp^a$. No integer strictly between $(k-1)p^a$ and $kp^a$ is divisible by $p^a$. Therefore each such $i$ satisfies $\vp(i)\le a-1$, and each summand below has valuation $a-\vp(i)\ge1$. It follows that
\[
 p^a(H_{m-1}-H_{(k-1)p^a})
 =\sum_{(k-1)p^a<i<kp^a}\frac{\eps_i p^a}{i}\in p\Zp.
\]
If the first alternative in~\eqref{eq:event} holds, then $p^aH_{m-1}\in p\Zp$. Subtracting the preceding difference gives $B_{p,a}(k-1)\in p\Zp$. If the second alternative holds, then $B_{p,a}(k)=p^aH_m\in p\Zp$.

For fixed $a$, put $M=\lfloor x/p^a\rfloor$. Each possible $m$ corresponds to one $k$ with $1\le k\le M$ and $p\nmid k$. Omitting the latter restriction can only increase the count. The map $k\mapsto k-1$ sends the indices satisfying the first alternative injectively into the set counted by $N_{p,a}(M)$, since $0\le k-1\le M-1$. The indices satisfying the second alternative form a subset of that counted set. Their union therefore has at most $2N_{p,a}(M)$ elements, even if an index satisfies both alternatives. Summing over the possible exponents $a$ proves~\eqref{eq:reduction}.
\end{proof}

The following lemma follows from the interval argument in the proof of Wu and Yan~\cite[Lemma~2.3]{WY}. We give the argument for arbitrary signs.

\begin{lemma}\label{lem:fixed}
Let $p$ be a prime and $k\ge0$ an integer. Put $E_{p,k}=\{n\ge0:\vp(H_n)\ge-k\}$ and $c_p=\lceil p/2\rceil$. For every integer $B\ge k+1$, we have
\[
 |E_{p,k}\cap[0,p^B)|\le p^{k+1}c_p^{B-k-1}.
\]
In particular, $|E_{p,k}\cap[0,x]|=o(x)$ uniformly in $\eps$ for each fixed $p$ and $k$.
\end{lemma}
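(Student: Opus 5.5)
Induction on $B$ is the plan: going from $B$ to $B+1$, each residue class modulo $p^B$ should contribute at most $c_p$ elements of $E_{p,k}$ among its $p$ lifts to $[0,p^{B+1})$. The base case $B=k+1$ is the trivial bound $|E_{p,k}\cap[0,p^{k+1})|\le p^{k+1}$.

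For the inductive step, fix $B\ge k+1$ and write $n\in[0,p^{B+1})$ as $n=jp^B+r$ with $0\le j<p$ and $0\le r<p^B$. Decompose
\[
 H_{jp^B+r}=\sum_{t=0}^{j-1}\bigl(H_{(t+1)p^B}-H_{tp^B}\bigr)+\bigl(H_{jp^B+r}-H_{jp^B}\bigr).
\]
The idea is to compare the tail $H_{jp^B+r}-H_{jp^B}$ with $H_r$ itself and the block differences with $H_{p^B}$-type quantities.

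Only the terms with $p^B\mid i$ can have valuation $\le -B$. Every $i$ in $(tp^B,(t+1)p^B)$ with $p^B\nmid i$ has $\vp(1/i)\ge 1-B$. A block $(tp^B,(t+1)p^B]$ with $t<p-1$ contains exactly one multiple of $p^B$, namely $(t+1)p^B$, and it has $\vp=B$ exactly. That multiple contributes $\eps_{(t+1)p^B}/((t+1)p^B)$, of valuation $-B\le -k-1<-k$.

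The condition $n\in E_{p,k}$ only involves $H_n$ modulo $p^{-k}\Zp$. Everything with valuation $\ge -k$ is irrelevant, but there are terms with valuation between $1-B$ and $-k-1$, which is the real difficulty. Fix $r$ and let $j$ vary. Then
\[
 H_{jp^B+r}=H_r+\sum_{t=1}^{j}\frac{\eps_{tp^B}}{tp^B}+R_j,
\]
where $R_j$ collects the terms $i\le jp^B+r$ with $i>r$ and $p^B\nmid i$.

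The key step is to isolate the leading term of valuation $-B$. Multiply by $p^B$ and reduce modulo $p$:
\[
 p^BH_{jp^B+r}\equiv p^BH_r+\sum_{t=1}^{j}\frac{\eps_{tp^B}}{t}+p^BR_j\pmod p.
\]
If $p^BR_j\in p\Zp$, then $n\in E_{p,k}$ with $k<B$ forces $p^BH_n\in p\Zp$. This pins down $S_j:=\sum_{t\le j}\eps_{tp^B}/t \bmod p$ to a single value determined by $r$.

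A walk with steps $\pm 1/t$ in $\Fp$, for $t=1,\dots,p-1$, consists of distinct points. For each $j$ one has $S_j-S_{j-1}=\pm 1/j$, and the values $\pm 1/j$ over $j$ cover each nonzero residue pair $\{x,-x\}$ exactly once. I then need to bound how often the walk $S_0,\dots,S_{p-1}$ hits a fixed value. Presumably the Wu–Yan interval argument shows this is at most $c_p=\lceil p/2\rceil$: consecutive hits are separated, so hits cannot occur at adjacent $j$, giving at most $\lceil p/2\rceil$ hits among $p$ indices.

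The main obstacle is the error $R_j$, whose valuation is only $\ge 1-B$, not $\ge 1-B+$(something). So $p^BR_j$ is merely in $p\Zp$ only if I argue carefully. Actually each term of $R_j$ has valuation $\ge 1-B$, so $p^BR_j\in p\Zp$ holds directly, and the reduction above is valid.

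What remains delicate is that $n\in E_{p,k}$ requires $\vp(H_n)\ge -k$, which is stronger than $\vp(H_n)\ge 1-B$. So I only use the weaker necessary condition $\vp(H_n)\ge 1-B$, which suffices. The restriction to $r$ with $r\in E_{p,k}$, needed for the induction, has to come from a different arrangement. I would instead write $n=r'+jp^{B}$ using the top digit, so that the induction hypothesis controls the lower part. If this fails, I would switch the roles and induct on the lowest digit block, checking which ordering makes $H_n$ determine membership of the truncated index.

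Finally, with the bound established, $c_p<p$ for every prime $p$: $c_2=1$, and $c_p=(p+1)/2<p$ for odd $p$. Hence $p^{k+1}c_p^{B-k-1}/p^B\to 0$, and covering $[0,x]$ by $[0,p^B)$ with $p^B\le px$ gives $o(x)$ uniformly in $\eps$, since the bound does not depend on $\eps$.
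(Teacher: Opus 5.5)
There is a genuine gap. Your top-digit computation is sound. For $n=jp^B+r$ with $0\le r<p^B$ and $B\ge k+1$, both $p^BH_r$ and $p^BR_j$ lie in $p\Zp$. So $n\in E_{p,k}$ forces $S_j=\sum_{t\le j}\eps_{tp^B}/t\equiv0\pmod p$, and since $S_j-S_{j-1}=\eps_{jp^B}/j\not\equiv0$, at most $c_p$ values of $j$ occur. (Your side claim that the walk visits distinct points is false, e.g.\ $1-\frac12+\frac13=\frac56\equiv0\pmod 5$, but you only need that hits are non-adjacent.)

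The problem is that this constrains only one digit. It yields $|E_{p,k}\cap[0,p^{B+1})|\le c_p\,p^B$, not $c_p\cdot p^{k+1}c_p^{B-k-1}$. The ratio $c_p/p$ of that bound to the interval length does not tend to zero, so neither the stated inequality nor the $o(x)$ conclusion follows. As you note yourself, $jp^B+r\in E_{p,k}$ does not imply $r\in E_{p,k}$, so the induction hypothesis for $[0,p^B)$ cannot be applied to the lower part. Rewriting $n=r'+jp^B$ is the same decomposition, and the ``lowest digit'' alternative is not carried out. The lower-digit constraints genuinely depend on the higher digits, so no $j$-independent condition on $r$ will close this induction.

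The missing idea is to run the same non-adjacency argument at every digit position $b\ge k+1$, inside blocks already fixed by the higher digits. Equivalently, the induction hypothesis must cover all aligned blocks $[jp^{B},(j+1)p^{B})$ with $j\ge0$, not just $[0,p^B)$. For a general block the base value $p^bH_{jp^{b+1}}$ need not be $0$ modulo $p$, or even lie in $\Zp$. So your computation with target $0$ should be replaced by the implication used in the paper. With $t_r=(jp+r)p^b$ and $I_r=[t_r,t_r+p^b)$: if $I_r$ meets $E_{p,k}$, then $p^bH_{t_r}\in p\Zp$, because $p^b(H_n-H_{t_r})\in p\Zp$ for $n\in I_r$ and $\vp(p^bH_n)\ge b-k\ge1$. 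Moreover, $p^b(H_{t_r}-H_{t_{r-1}})\equiv\eps_{t_r}/(jp+r)\not\equiv0\pmod p$. Hence at most $c_p$ of the $p$ children of any block of length $p^{b+1}$ meet $E_{p,k}$. Subdividing $[0,p^B)$ at levels $b=B-1,\dots,k+1$ then gives the bound $p^{k+1}c_p^{B-k-1}$.
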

\begin{proof}
We first bound how many of the $p$ equal subintervals of a given interval can meet $E_{p,k}$. Let $b\ge k+1$ and $j\ge0$ be integers, and put $t_r=(jp+r)p^b$ and $I_r=[t_r,t_r+p^b)$ for $0\le r<p$. These half-open intervals partition $[jp^{b+1},(j+1)p^{b+1})$.

If $n\in I_r$ is an integer, there is no multiple of $p^b$ in $(t_r,n]$. Thus every term in
\[
 p^b(H_n-H_{t_r})=\sum_{t_r<i\le n}\frac{\eps_i p^b}{i}
\]
has valuation at least $1$, and the entire sum belongs to $p\Zp$. If also $n\in E_{p,k}$, then $\vp(p^bH_n)=b+\vp(H_n)\ge b-k\ge1$. Subtracting the displayed difference from $p^bH_n$ shows that $p^bH_{t_r}\in p\Zp$. We have therefore proved the implication
\[
 I_r\cap E_{p,k}\ne\varnothing\quad\Longrightarrow\quad p^bH_{t_r}\in p\Zp.
\]

For $1\le r<p$, the only multiple of $p^b$ in $(t_{r-1},t_r]$ is $t_r$. Its coefficient after multiplication by $p^b$ is $\eps_{t_r}/(jp+r)$, and $p\nmid jp+r$. All the other summands belong to $p\Zp$. Consequently,
\[
 p^b(H_{t_r}-H_{t_{r-1}})
 \equiv\frac{\eps_{t_r}}{jp+r}\not\equiv0\pmod p.
\]
If both $I_{r-1}$ and $I_r$ met $E_{p,k}$, the implication above would put both $p^bH_{t_{r-1}}$ and $p^bH_{t_r}$ in $p\Zp$. Their difference would then be zero modulo $p$, contradicting the last congruence. Thus no two consecutive intervals can meet $E_{p,k}$. Pair $I_0$ with $I_1$, $I_2$ with $I_3$, and so on. Each pair contributes at most one interval, with at most one unpaired interval left. At most $\lceil p/2\rceil=c_p$ intervals can therefore meet $E_{p,k}$.

If $B=k+1$, the original interval contains $p^{k+1}$ integers, giving the claimed bound. Suppose that $B>k+1$. Starting with $[0,p^B)$, divide every interval retained at one stage into $p$ equal subintervals, retaining only those meeting $E_{p,k}$. Every parent interval has the form $[jp^{b+1},(j+1)p^{b+1})$, so the preceding argument applies at each stage. The exponents used are $b=B-1,B-2,\ldots,k+1$, giving $B-k-1$ divisions. Each retained parent contributes at most $c_p$ children. At the last stage, at most $c_p^{B-k-1}$ intervals remain, each of length $p^{k+1}$ and containing exactly $p^{k+1}$ integers. Every integer of $E_{p,k}\cap[0,p^B)$ remains in one of these intervals, which proves the bound.

For sufficiently large $x$, choose $B$ with $p^{B-1}\le x<p^B$. Then
\[
 \sup_\eps\frac{|E_{p,k}\cap[0,x]|}{x}
 \le\frac{p^{k+1}c_p^{B-k-1}}{p^{B-1}}
 =p\left(\frac{c_p}{p}\right)^{B-k-1}\longrightarrow0.
\]
Here $B\to\infty$ as $x\to\infty$, while $p$ and $k$ remain fixed. Since $c_p<p$, the final expression tends to zero. It does not depend on $\eps$, which also proves the asserted uniformity.
\end{proof}

For $d\ge1$ and $\sigma=(\sigma_1,\ldots,\sigma_d)\in\{-1,1\}^d$, let
\[
 P_{d,\sigma}(T)=\sum_{s=1}^d\sigma_s
 \prod_{\substack{1\le i\le d\\i\ne s}}(T+i).
\]
We first record a consequence of K\"ursch\'ak's $2$-adic argument~\cite{K}. For signed sums starting at $1$, see also Bettin, Molteni and Sanna~\cite[Proposition~2.6]{BMS}.

\begin{lemma}\label{lem:nonzero}
For every integer $d\ge1$, every $\sigma\in\{-1,1\}^d$ and every integer $k\ge0$, we have $0<|P_{d,\sigma}(k)|\le d(k+d)^{d-1}$.
\end{lemma}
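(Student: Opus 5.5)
The upper bound is immediate from the triangle inequality. Each of the $d$ summands of $P_{d,\sigma}(k)$ has absolute value $\prod_{i\ne s}(k+i)$, which is a product of $d-1$ factors, each at most $k+d$. Hence $|P_{d,\sigma}(k)|\le d(k+d)^{d-1}$.

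For the nonvanishing, I would use the identity
\[
 P_{d,\sigma}(k)=\Bigl(\prod_{i=1}^d(k+i)\Bigr)\sum_{s=1}^d\frac{\sigma_s}{k+s}.
\]
This reduces the claim to showing that the signed sum $S=\sum_{s=1}^d\sigma_s/(k+s)$ of reciprocals of the $d$ consecutive integers $k+1,\ldots,k+d$ is nonzero. This is K\"ursch\'ak's $2$-adic argument. Among $k+1,\ldots,k+d$, let $2^e$ be the largest power of $2$ dividing at least one of them. Exactly one of them, say $k+s_0$, is divisible by $2^e$. Indeed, if two of them were, then some integer strictly between them would be an odd multiple of $2^e$ plus $2^e$, that is, a multiple of $2^{e+1}$, contradicting maximality. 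More precisely, two consecutive multiples of $2^e$ in the range are $2^e c$ and $2^e(c+1)$, and one of $c,c+1$ is even.

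Now apply the unequal-valuation rule from the preliminaries with $p=2$. The term $\sigma_{s_0}/(k+s_0)$ has $2$-adic valuation exactly $-e$, and every other term has valuation greater than $-e$. So the sum of the other terms has valuation at least $1-e$, and therefore $\nu_2(S)=-e$ is finite. In particular $S\ne0$. Since the product $\prod_{i=1}^d(k+i)$ is positive for $k\ge0$, it follows that $P_{d,\sigma}(k)\ne0$.

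The only mildly delicate point is the uniqueness of the element with maximal $2$-adic valuation, which I justify as above. When $d=1$ the statement is trivial, since $P_{1,\sigma}(k)=\sigma_1=\pm1$ (the empty product equals $1$), and the bound reads $1\le 1$.
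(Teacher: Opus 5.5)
Your proof is correct and follows essentially the same route as the paper. You use the triangle-inequality upper bound, the factorization $P_{d,\sigma}(k)=\bigl(\prod_{i=1}^d(k+i)\bigr)\sum_{s=1}^d\sigma_s/(k+s)$, and K\"ursch\'ak's observation that exactly one of $k+1,\ldots,k+d$ has maximal $2$-adic valuation; the only cosmetic difference is that the paper multiplies by $2^b$ and reduces modulo $2$ instead of invoking the unequal-valuation rule directly.
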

\begin{proof}
We first prove that exactly one of $k+1,\ldots,k+d$ has largest $2$-adic valuation. A largest valuation exists because this is a finite nonempty set of positive integers. Suppose that two of its members $u<v$ have this valuation $b$. Write $u=2^b(2r+1)$ and $v=2^b(2s+1)$ with integers $s\ge r+1$. Then $v-u\ge2^{b+1}$, and hence
\[
 u<u+2^b=2^{b+1}(r+1)<v.
\]
The integer $u+2^b$ lies in the same consecutive interval and has valuation at least $b+1$, contradicting the maximality of $b$.

Write $k+j=2^{b_j}u_j$ with $u_j$ odd, and let $j_0$ be the unique index for which $b=b_{j_0}=\max_j b_j$. After multiplying the signed reciprocal sum by $2^b$, we have
\[
 2^b\sum_{j=1}^d\frac{\sigma_j}{k+j}
 =\frac{\sigma_{j_0}}{u_{j_0}}
 +\sum_{j\ne j_0}\frac{\sigma_j2^{b-b_j}}{u_j}.
\]
For $j\ne j_0$, the integer $b-b_j$ is at least $1$, so the corresponding summand belongs to $2\Z_{(2)}$. The remaining summand has odd numerator and denominator, and is therefore congruent to $1$ modulo $2$. Thus
\[
 2^b\sum_{j=1}^d\frac{\sigma_j}{k+j}
 \equiv1\pmod2.
\]
In particular, the signed sum is nonzero. Since every $k+i$ is positive, multiplying by their product preserves nonvanishing and gives
\[
 P_{d,\sigma}(k)=\left(\prod_{i=1}^d(k+i)\right)
 \left(\sum_{j=1}^d\frac{\sigma_j}{k+j}\right)\ne0.
\]
The definition also shows that $P_{d,\sigma}(k)$ is an integer. Finally, each product in that definition has $d-1$ factors, all at most $k+d$. By the triangle inequality,
\[
 |P_{d,\sigma}(k)|\le\sum_{j=1}^d\prod_{\substack{1\le i\le d\\i\ne j}}(k+i)
 \le d(k+d)^{d-1}.
\]
When $d=1$, the empty product equals $1$ and $P_{1,\sigma}(k)=\sigma_1$, so the same argument and bound remain valid.
\end{proof}

The next lemma uses the short-gap argument of Sanna~\cite[Lemma~2.2]{Sanna} and Wu and Chen~\cite[Lemma~2.2]{WC17}. For signed versions, see Wu and Chen~\cite[Lemma~3.2]{WC} and Wu and Yan~\cite[Lemmas~2.1--2.2]{WY}. Here we include all sign patterns.

\begin{lemma}\label{lem:returns}
For integers $a\ge1$, $M\ge0$ and $1\le L<p$, we have
\begin{equation}\label{eq:returns}
 N_{p,a}(M)\le\frac{M}{L}
 +\left(\frac{M}{p}+1\right)(1+R(L)),
\end{equation}
where $R(L)=\sum_{d=1}^L2^d(d-1)\le2^{L+1}L$.
\end{lemma}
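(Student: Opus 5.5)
The plan is to study gaps between consecutive members of the counted set, working inside blocks of $p$ consecutive values of $k$. Let $K=\{0\le k\le M:B_{p,a}(k)\in p\Zp\}$, so that $N_{p,a}(M)=|K|$. Split $[0,M]$ into the blocks $J_j=[jp,(j+1)p)\cap[0,M]$. At most $M/p+1$ of these blocks are nonempty. For $k\in K$, let $k'$ be the next element of $K$, if it exists. Each $k\in K$ then falls into at least one of three classes.
\begin{enumerate}
\item[(i)] $k$ is the last element of $K$ in its block (this includes the case where $k'$ does not exist). There are at most $M/p+1$ such $k$.
\item[(ii)] $k'-k>L$. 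The intervals $(k,k+L]$ for these $k$ are pairwise disjoint and lie in $(0,M]$, so there are at most $M/L$ such $k$.
\item[(iii)] $k'$ lies in the same block as $k$ and $d=k'-k\le L$.
\end{enumerate}
It therefore suffices to show that each block contains at most $R(L)$ elements of class (iii).

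For class (iii), take $k<k'=k+d$ in the same block $J_j$. Then none of $k+1,\dots,k+d$ is divisible by $p$. Expand
\[
B_{p,a}(k')-B_{p,a}(k)=p^a\sum_{kp^a<i\le k'p^a}\frac{\eps_i}{i}.
\]
The terms with $p^a\nmid i$ have valuation at least $1$, by the same argument as in Lemma~\ref{lem:reduction}. The terms $i=(k+s)p^a$ contribute $\sigma_s/(k+s)$, where $\sigma_s=\eps_{(k+s)p^a}$. Both $B_{p,a}(k)$ and $B_{p,a}(k')$ lie in $p\Zp$, so
\[
\sum_{s=1}^d\frac{\sigma_s}{k+s}\equiv0\pmod p.
\]
Multiplying by $\prod_{i=1}^d(k+i)$, which is a $p$-adic unit, gives $P_{d,\sigma}(k)\equiv0\pmod p$. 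The pattern $\sigma$ depends on $k$, so I count over all pairs $(d,\sigma)$ with $1\le d\le L$ and $\sigma\in\{-1,1\}^d$. For each fixed pair, I bound the number of $k$ in a block that begin a class-(iii) gap with this length and pattern.

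The key step is that $P_{d,\sigma}$ is a nonzero polynomial modulo $p$. I note that Lemma~\ref{lem:nonzero} only gives nonvanishing over $\Z$, which is not enough here. Instead I evaluate at $T=-1$: every product except the one with $s=1$ contains the factor $T+1$, so
\[
P_{d,\sigma}(-1)=\sigma_1\prod_{i=2}^d(i-1)=\pm(d-1)!.
\]
This is nonzero modulo $p$ because $d\le L<p$. Hence $P_{d,\sigma}$ reduces to a nonzero polynomial over $\Fp$ of degree at most $d-1$, and it has at most $d-1$ roots in $\Fp$. The integers $k$ in one block have distinct residues modulo $p$. So for each $(d,\sigma)$, at most $d-1$ elements $k$ of a given block begin a class-(iii) gap with that length and pattern.

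Summing over $\sigma$ and $d$ bounds the class-(iii) elements in a block by
\[
\sum_{d=1}^L2^d(d-1)=R(L).
\]
Adding the three classes gives
\[
N_{p,a}(M)\le\frac{M}{L}+\Bigl(\frac{M}{p}+1\Bigr)(1+R(L)).
\]
The bound $R(L)\le2^{L+1}L$ follows from $d-1<L$ and $\sum_{d\le L}2^d<2^{L+1}$. I expect the main obstacle to be the nonvanishing of $P_{d,\sigma}$ modulo $p$, which the evaluation at $T=-1$ settles. The only other care needed is the bookkeeping showing that gaps inside a block avoid multiples of $p$.
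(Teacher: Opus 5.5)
Your proof is correct and follows the paper's argument. Both split into blocks of length $p$ and separate one distinguished index per block, the long gaps (at most $M/L$), and the short gaps. A short gap forces $P_{d,\sigma}(k)\equiv0\pmod p$, and $P_{d,\sigma}$ is nonzero modulo $p$ because $P_{d,\sigma}(-1)=\sigma_1(d-1)!$.

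The differences are only bookkeeping and give the same bound:
\begin{itemize}
\item You count the last index of each block rather than the first.
\item You bound short-gap starts block by block, using that a block has distinct residues. The paper instead counts per residue class, with at most $\lfloor M/p\rfloor+1$ representatives in $[0,M]$.
\end{itemize}
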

\begin{proof}
For each integer $j\ge0$, list the indices counted by $N_{p,a}(M)$ in $[jp,(j+1)p)$ as $k_{j,1}<\cdots<k_{j,s_j}$. Empty lists contribute nothing. A nonempty list has one first index, and each of its other indices is the right endpoint of exactly one gap $d_{j,r}=k_{j,r+1}-k_{j,r}$, where $1\le r<s_j$. Thus it suffices to count first indices, gaps longer than $L$, and gaps of length at most $L$.

Since every counted index lies in $[0,M]$, only the blocks with $0\le j\le\lfloor M/p\rfloor$ can be nonempty. The number of first indices is at most $\lfloor M/p\rfloor+1\le M/p+1$. Within one block, the sum of its gaps telescopes to $k_{j,s_j}-k_{j,1}$. These intervals have disjoint interiors as $j$ varies and all lie in $[0,M]$. Therefore,
\[
 \sum_j\sum_{r=1}^{s_j-1}d_{j,r}
 =\sum_{j:s_j\ge2}(k_{j,s_j}-k_{j,1})\le M.
\]
Every gap longer than $L$ contributes more than $L$ to this sum, so there are at most $M/L$ such gaps.

Now suppose that $k$ and $k+d$ are consecutive indices in the same block and $1\le d\le L$. Because $jp\le k<k+d<(j+1)p$, none of $k+1,\ldots,k+d$ is divisible by $p$. Among the indices $i$ in $(kp^a,(k+d)p^a]$, the multiples of $p^a$ are precisely $(k+s)p^a$ with $1\le s\le d$. Separating these terms gives
\[
 B_{p,a}(k+d)-B_{p,a}(k)
 =\sum_{s=1}^d\frac{\eps_{(k+s)p^a}}{k+s}
 +\sum_{\substack{kp^a<i\le(k+d)p^a\\p^a\nmid i}}\frac{\eps_i p^a}{i}.
\]
The first sum lies in $\Zp$, since its denominators are prime to $p$. Every term in the second sum has valuation $a-\vp(i)\ge1$. Both $B_{p,a}(k)$ and $B_{p,a}(k+d)$ belong to $p\Zp$ by the definition of the lists. Reducing the displayed identity modulo $p$ therefore gives
\[
 0\equiv B_{p,a}(k+d)-B_{p,a}(k)
 \equiv\sum_{s=1}^d\frac{\eps_{(k+s)p^a}}{k+s}\pmod p.
\]
Put $\sigma_s=\eps_{(k+s)p^a}$. Multiplying the congruence by $\prod_{i=1}^d(k+i)$, which is not divisible by $p$, gives
\begin{equation}\label{eq:root}
 P_{d,\sigma}(k)\equiv0\pmod p.
\end{equation}
Each summand of $P_{d,\sigma}$ has degree $d-1$, so its degree is at most $d-1$. At $T=-1$, all summands except the one with $s=1$ contain the zero factor $T+1$. Hence $P_{d,\sigma}(-1)=\sigma_1(d-1)!$. Since $d\le L<p$, none of $1,\ldots,d-1$ is divisible by $p$, and this value is nonzero modulo $p$. Thus the reduction of $P_{d,\sigma}$ is a nonzero polynomial over $\Fp$. By the factor theorem, a nonzero polynomial of degree at most $d-1$ has at most $d-1$ roots. This includes $d=1$, when the polynomial is the nonzero constant $\sigma_1$ and has no roots.

For each fixed $d$, there are $2^d$ possible sign patterns. The union of the root classes for all $1\le d\le L$ and all these patterns therefore contains at most $\sum_{d=1}^L2^d(d-1)=R(L)$ residue classes modulo $p$. Overlap between root sets can only decrease this number. By~\eqref{eq:root}, every starting index of a short gap belongs to one of these classes.

An individual residue class contains at most $\lfloor M/p\rfloor+1$ integers in $[0,M]$, because consecutive representatives are $p$ apart. Moreover, a starting index belongs to only one block and has at most one successor in its ordered list. Thus the number of short gaps is at most the number of possible starting indices, which is at most $(M/p+1)R(L)$. Adding the bounds for first indices, long gaps and short gaps proves~\eqref{eq:returns}. Finally,
\[
 R(L)\le L\sum_{d=1}^L2^d=L(2^{L+1}-2)\le2^{L+1}L.
\]
All sign patterns were included, so the bound is independent of the original sequence $\eps$.
\end{proof}

\section{Proofs}
\begin{proof}[Proof of Theorem~\ref{thm:main}]
Put $D_\eps(x)=|\{1\le m\le x:v_{m-1}\ne v_m\}|$. We first reduce the desired estimate to a sum over primes. For $q_m=\lcm(1,\ldots,m)$, the identity
\[
 H_m=\frac1{q_m}\sum_{i=1}^m\eps_i\frac{q_m}{i}
\]
has an integer numerator, so $v_m\mid q_m$. The same argument gives $v_{m-1}\mid q_m$, with $v_0=1$ when $m=1$. If the two positive denominators are different, their prime factorizations differ at some prime $p$ dividing $q_m$, and necessarily $p\le m$. Every index counted by $D_\eps(x)$ is therefore counted by at least one $D_p(x)$ with $p\le x$. It follows that
\begin{equation}\label{eq:union}
 D_\eps(x)\le\sum_{p\le x}D_p(x).
\end{equation}
An index may be counted for several primes, which is why~\eqref{eq:union} is an inequality. It suffices to prove that its right-hand side is $o(x)$ uniformly in $\eps$.

Fix integers $Y\ge256$ and $L\ge1$, and take $x$ sufficiently large that $\sqrt x>\max\{Y,L,2\}$. These two parameters will remain fixed while $x$ tends to infinity. For the primes up to $Y$, we use the fixed-prime estimate of Lemma~\ref{lem:fixed}. For the intermediate primes, we apply Lemma~\ref{lem:returns} with a length depending on $p$. For primes exceeding $\sqrt x$, we keep the length $L$ fixed and count prime divisors of the nonzero integers in Lemma~\ref{lem:nonzero}.

\medskip\noindent\textit{Case 1. $p\le Y$.}
Fix a prime $p\le Y$ and an integer $A\ge1$. Let $m$ be counted by $D_p(x)$ and put $a=\vp(m)$. Lemma~\ref{lem:reduction} gives $a\ge1$. If $a\le A$, then $1-a\ge1-A$, so~\eqref{eq:event} gives $\vp(H_{m-1})\ge1-A$ or $\vp(H_m)\ge1-A$. By the definition of $E_{p,A-1}$, this means $m-1\in E_{p,A-1}$ or $m\in E_{p,A-1}$. Both indices lie in $[0,x]$. Each index in this latter set can correspond to at most one $m$ by each of the maps $m\mapsto m-1$ and $m\mapsto m$. This accounts for at most $2|E_{p,A-1}\cap[0,x]|$ values of $m$.

If $a\ge A+1$, then $p^{A+1}\mid m$. The number of such positive integers $m\le x$ is at most $\lfloor x/p^{A+1}\rfloor$. Combining the two possibilities, we obtain
\[
 D_p(x)\le2|E_{p,A-1}\cap[0,x]|+\left\lfloor\frac{x}{p^{A+1}}\right\rfloor.
\]
Divide by $x$ and take the supremum over $\eps$. For fixed $p$ and $A$, the first term tends to zero by Lemma~\ref{lem:fixed}. Hence
\[
 \limsup_{x\to\infty}\sup_\eps\frac{D_p(x)}x\le p^{-A-1}.
\]
This holds for every integer $A\ge1$. Letting $A\to\infty$ shows that the nonnegative quantity $\sup_\eps D_p(x)/x$ tends to zero for each fixed $p$. There are only finitely many primes up to the fixed number $Y$, and therefore
\begin{equation}\label{eq:fixedprimes}
 e_Y(x):=\sup_\eps\frac1x\sum_{p\le Y}D_p(x)
 \le\sum_{p\le Y}\sup_\eps\frac{D_p(x)}x\longrightarrow0.
\end{equation}

\medskip\noindent\textit{Case 2. $Y<p\le\sqrt x$.}
Put $L_p=\lfloor(\log p)/(4\log2)\rfloor$ and $A_p=\lfloor(\log x)/(\log p)\rfloor$. The exponents occurring in~\eqref{eq:reduction} are precisely $1\le a\le A_p$. We first check the size of $L_p$. Since $p>256$, the number $z=(\log p)/(4\log2)$ is greater than $2$. For $z\ge2$, we have $\lfloor z\rfloor\ge z-1\ge z/2$. Thus
\[
 \frac{\log p}{8\log2}\le L_p\le\frac{\log p}{4\log2}<p.
\]
In particular, $1\le L_p<p$, as required in Lemma~\ref{lem:returns}. Moreover, $2^{L_p}\le2^{(\log p)/(4\log2)}=p^{1/4}$, and therefore
\[
 1+R(L_p)\le1+2^{L_p+1}L_p\ll p^{1/4}\log p.
\]
The lower bound for $L_p$ controls the long gaps in Lemma~\ref{lem:returns}, while this upper bound controls its short-gap contribution.

Apply that lemma with $M=\lfloor x/p^a\rfloor$ and $L=L_p$. Since $M\le x/p^a$, it gives
\[
 N_{p,a}\left(\left\lfloor\frac{x}{p^a}\right\rfloor\right)
 \le\frac{x}{p^aL_p}+\left(\frac{x}{p^{a+1}}+1\right)(1+R(L_p)).
\]
Substituting the two bounds for $L_p$ and $R(L_p)$, we obtain, with an absolute implied constant,
\[
 N_{p,a}\left(\left\lfloor\frac{x}{p^a}\right\rfloor\right)
 \ll\frac{x}{p^a\log p}+\frac{x\log p}{p^{a+3/4}}+p^{1/4}\log p.
\]
To sum these terms over $a$, we first use
\[
 \sum_{a=1}^{A_p}p^{-a}\le\sum_{a=1}^\infty p^{-a}
 =\frac{p^{-1}}{1-p^{-1}}=\frac1{p-1}.
\]
The first two contributions are therefore bounded by
\[
 \sum_{a=1}^{A_p}\frac{x}{p^a\log p}\le\frac{x}{(p-1)\log p},
 \qquad
 \sum_{a=1}^{A_p}\frac{x\log p}{p^{a+3/4}}\le\frac{x\log p}{p^{3/4}(p-1)}.
\]
The third term does not depend on $a$, and $A_p\log p\le\log x$. Hence $\sum_{a=1}^{A_p}p^{1/4}\log p\le p^{1/4}\log x$. Applying Lemma~\ref{lem:reduction} to the summed estimate, and absorbing its factor $2$ into the absolute implied constant, we obtain
\[
 D_p(x)\ll\frac{x}{(p-1)\log p}
 +\frac{x\log p}{p^{3/4}(p-1)}+p^{1/4}\log x.
\]
The ratio of the second term to the first is $(\log p)^2/p^{3/4}$. This is bounded by an absolute constant: on putting $u=\log p$, the function becomes $u^2e^{-3u/4}$, which has its maximum for $u\ge0$ at $u=8/3$. Indeed, its derivative is $ue^{-3u/4}(2-3u/4)$. We may therefore absorb the second term into the first. Since $p-1\ge p/2$, we have
\[
 D_p(x)\ll\frac{x}{p\log p}+p^{1/4}\log x.
\]

We next estimate the sum of $1/(p\log p)$. For $Z>Y$, let $f(t)=1/(t\log t)$. Partial summation gives
\[
 \sum_{Y<p\le Z}f(p)=\pi(Z)f(Z)-\pi(Y)f(Y)-\int_Y^Z\pi(t)f'(t)\,dt.
\]
Here $-f'(t)=(\log t+1)/(t^2(\log t)^2)$. Chebyshev's bound gives $\pi(Z)f(Z)\ll1/(\log Z)^2$. Also, since $t\ge Y\ge256$, we have $\log t+1\le2\log t$, and hence
\[
 -\pi(t)f'(t)\ll\frac{\log t+1}{t(\log t)^3}
 \le\frac{2}{t(\log t)^2}.
\]
The lower boundary term $-\pi(Y)f(Y)$ is nonpositive and may be omitted for an upper bound. Thus the nonnegative partial sums over $Y<p\le Z$ are bounded above by a convergent improper integral and a term tending to zero. Letting $Z\to\infty$ proves both convergence and the bound
\[
 \sum_{p>Y}\frac1{p\log p}
 \ll\int_Y^\infty\frac{dt}{t(\log t)^2}
 =\frac1{\log Y}.
\]
For the remaining term, there are at most $\sqrt x$ primes up to $\sqrt x$, and each satisfies $p^{1/4}\le x^{1/8}$. Consequently, $\sum_{p\le\sqrt x}p^{1/4}\le x^{5/8}$. Summing the bound for $D_p(x)$ over $Y<p\le\sqrt x$, we obtain
\begin{equation}\label{eq:middle}
 \sum_{Y<p\le\sqrt x}D_p(x)
 \ll\frac{x}{\log Y}+x^{5/8}\log x.
\end{equation}

\medskip\noindent\textit{Case 3. $\sqrt x<p\le x$.}
Put $M_p=\lfloor x/p\rfloor$. Since $p^2>x$, only $a=1$ occurs in~\eqref{eq:reduction}, and hence $D_p(x)\le2N_{p,1}(M_p)$. Moreover, $M_p\le x/p<\sqrt x<p$. All the indices counted by $N_{p,1}(M_p)$ therefore lie in the single block $[0,p)$.

List these indices in increasing order. As in Lemma~\ref{lem:returns}, each is either the first index or the right endpoint of one gap. There is at most one first index, and the sum of the gap lengths is at most $M_p$. Thus at most $M_p/L$ gaps have length greater than $L$. Let $S_L(x)$ be the total number of gaps of length at most $L$, summed over all primes $\sqrt x<p\le x$. The first indices contribute at most $\pi(x)$, and the long gaps contribute at most $\sum_{\sqrt x<p\le x}M_p/L$. Using $M_p\le x/p$, we obtain
\begin{equation}\label{eq:largecount}
 \sum_{\sqrt x<p\le x}N_{p,1}(M_p)
 \le\pi(x)+\frac{x}{L}\sum_{\sqrt x<p\le x}\frac1p+S_L(x).
\end{equation}
To estimate the prime reciprocal sum here, partial summation gives the exact identity
\[
 \sum_{\sqrt x<p\le x}\frac1p
 =\frac{\pi(x)}x-\frac{\pi(\sqrt x)}{\sqrt x}
 +\int_{\sqrt x}^x\frac{\pi(t)}{t^2}\,dt.
\]
Omit the nonpositive lower boundary term and apply Chebyshev's bound to the other two terms. Then
\[
 \sum_{\sqrt x<p\le x}\frac1p
 \ll\frac1{\log x}+\int_{\sqrt x}^x\frac{dt}{t\log t}.
\]
The integral equals $\log\log x-\log\log\sqrt x=\log2$, because $\log\sqrt x=(\log x)/2$. The prime reciprocal sum is therefore bounded by an absolute constant. In particular, this constant does not depend on $L$.

It remains to show that $S_L(x)\ll_L\sqrt x$. A gap from $k$ to $k+d$ counted by $S_L(x)$ satisfies $0\le k<k+d\le M_p<\sqrt x$ and $1\le d\le L<p$. Both endpoints lie in $[0,p)$, so the argument leading to~\eqref{eq:root}, now with $a=1$, applies. Taking $\sigma_s=\eps_{(k+s)p}$, we have $p\mid P_{d,\sigma}(k)$.

The sign pattern associated with a gap may vary with $p$. For each $d$, however, there are only $2^d$ possible patterns, and we count all of them. We reverse the order of counting: fix $k,d,\sigma$ first, and then count the primes $p>\sqrt x$ which can divide $P_{d,\sigma}(k)$. For a fixed triple, this integer no longer depends on $p$ or on the original sequence of signs.

Put $Q=|P_{d,\sigma}(k)|$. By Lemma~\ref{lem:nonzero}, it is a positive integer. Since $d\le L$ and $k\le\sqrt x$, the same lemma gives
\[
 1\le Q\le d(k+d)^{d-1}\le L(\sqrt x+L)^{L-1}.
\]
Our choice of $x$ ensures $L\le\sqrt x$ and $2\sqrt x\le x$. Therefore,
\[
 Q\le L(2\sqrt x)^{L-1}\le\sqrt x\,x^{L-1}\le x^L.
\]
If $Q=1$, it has no prime divisors. Now suppose $Q>1$. Let $p_1,\ldots,p_r$ be its distinct prime divisors greater than $\sqrt x$. If $r=0$, there is nothing to count. If $r\ge1$, their product divides $Q$, and consequently
\[
 x^{r/2}<p_1\cdots p_r\le Q\le x^L.
\]
As $x>1$, taking logarithms gives $r<2L$. Thus, for every fixed triple $k,d,\sigma$, at most $2L$ primes can contribute a gap with those data.

There are at most $\lfloor\sqrt x\rfloor+1$ possible integers $k$. For each $d$ with $1\le d\le L$, there are $2^d$ possible sign patterns. For a fixed prime $p$, each starting index $k$ belongs to only one ordered list and has only one possible next counted index. Thus two distinct gaps cannot give the same tuple $(p,k,d,\sigma)$. Counting all such tuples, including triples that do not actually arise from a gap, gives the upper bound
\[
 S_L(x)\le2L(\lfloor\sqrt x\rfloor+1)\sum_{d=1}^L2^d
 \ll_L\sqrt x.
\]
The implied constant depends only on $L$, since the finite sum of sign-pattern counts is $\sum_{d=1}^L2^d=2^{L+1}-2$. In particular, it is independent of $\eps$. Inserting this estimate and the prime reciprocal bound into~\eqref{eq:largecount}, and using $\pi(x)\ll x/\log x$ and $D_p(x)\le2N_{p,1}(M_p)$, we obtain
\begin{equation}\label{eq:large}
 \sum_{\sqrt x<p\le x}D_p(x)
 \le C\left(\frac{x}{\log x}+\frac{x}{L}\right)+C_L\sqrt x,
\end{equation}
where $C$ is absolute and $C_L$ depends only on $L$.

Combining~\eqref{eq:union}, \eqref{eq:fixedprimes}, \eqref{eq:middle} and~\eqref{eq:large}, we have
\[
 \sup_\eps\frac{D_\eps(x)}x
 \le e_Y(x)+C\left(\frac1{\log Y}+x^{-3/8}\log x
 +\frac1{\log x}+\frac1L\right)+C_Lx^{-1/2}.
\]
For fixed $Y$ and $L$, the term $e_Y(x)$ tends to zero by~\eqref{eq:fixedprimes}. The terms $x^{-3/8}\log x$, $1/\log x$ and $C_Lx^{-1/2}$ also tend to zero. All these bounds hold uniformly in $\eps$. Taking the upper limit in $x$, we obtain
\[
 \limsup_{x\to\infty}\sup_\eps\frac{D_\eps(x)}x
 \le C\left(\frac1{\log Y}+\frac1L\right).
\]
The constant $C$ is independent of $Y$ and $L$. Given $\delta>0$, choose fixed integers $Y\ge256$ and $L\ge1$ so large that $C(1/\log Y+1/L)<\delta/2$. With these choices held fixed, take $x$ sufficiently large that
\[
 e_Y(x)+Cx^{-3/8}\log x+\frac C{\log x}+C_Lx^{-1/2}<\frac\delta2.
\]
This threshold for $x$ does not depend on $\eps$. Substitution into the displayed bound for $\sup_\eps D_\eps(x)/x$ now gives $\sup_\eps D_\eps(x)/x<\delta$. Since $\delta$ is arbitrary, $\sup_\eps D_\eps(x)/x\to0$.

Finally, the change of index $m=n+1$ takes the integers $1\le n\le x$ to the integers $2\le m\le x+1$. The omitted index $m=1$ is not counted by $D_\eps$, because $v_0=v_1=1$. Hence, for every sequence $\eps$,
\[
 |\{1\le n\le x:v_n\ne v_{n+1}\}|=D_\eps(x+1).
\]
After taking the supremum and dividing by $x$, this gives
\[
 \sup_\eps\frac{|\{1\le n\le x:v_n\ne v_{n+1}\}|}{x}
 =\frac{x+1}{x}\sup_\eps\frac{D_\eps(x+1)}{x+1}\longrightarrow0.
\]
For any fixed sequence, exactly $D_\eps(x+1)$ of the $\lfloor x\rfloor$ indices have unequal consecutive denominators. The proportion with equal denominators is therefore $\lfloor x\rfloor/x-D_\eps(x+1)/x$, which tends to $1$. This  completes the proof of Theorem~\ref{thm:main}.
\end{proof}

\end{document}